\documentclass{article}%
\usepackage{graphicx}
\usepackage{amsmath}
\usepackage{amsfonts}
\usepackage{amssymb}%
\setcounter{MaxMatrixCols}{30}
\providecommand{\U}[1]{\protect\rule{.1in}{.1in}}
\newtheorem{theorem}{Theorem}
{}

\newtheorem{condition}{Condition}

\newtheorem{corollary}{Corollary}

\newtheorem{lemma}{Lemma}
{}

\newtheorem{remark}{Remark}

\newenvironment{proof}[1][Proof]{\textbf{#1.} }{\ \rule{0.5em}{0.5em}}

\oddsidemargin 1.0cm \evensidemargin 1.0cm
\voffset -1cm
\topmargin 0.1cm
\headheight 0.5cm
\headsep 1.5cm
\begin{document}

\title{On\ the Bands of the Schr\"{o}dinger Operator with a Matrix Potential}
\author{O.A.Veliev\\{\small Dogus University, \ Istanbul, Turkey.}\\\ {\small e-mail: oveliev@dogus.edu.tr}}
\date{}
\maketitle

\begin{abstract}
In this article we consider the one-dimensional Schr\"{o}dinger operator
$L(Q)$ with a Hermitian periodic $m\times m$ matrix potential $Q$. We
investigate the bands and gaps of the spectrum and prove that most of the
positive real axis is overlapped by $m$ bands. Moreover, we find a condition
on the potential $Q$ for which the number of gaps in the spectrum of $L(Q)$ is finite.

Key Words: Self-adjoint differential operator, Spectral bands, Periodic matrix potential.

AMS Mathematics Subject Classification: 34L05, 34L20.

\end{abstract}

\section{Introduction and Preliminary Facts}

Let $L(Q)$ be the differential operator generated in the space $L_{2}%
^{m}(-\infty,\infty)$ of the vector functions $y=\left(  y_{1},y_{2}%
,...,y_{m}\right)  $ by the differential expression%
\begin{equation}
-y^{^{\prime\prime}}+Qy, \tag{1}%
\end{equation}
where $y_{k}\in L_{2}(-\infty,\infty)$ for $k=1,2,...,m,$ $Q(x)=\left(
q_{s,j}(x)\right)  $ is a $m\times m$ Hermitian matrix for all $x\in
(-\infty,\infty)$, $q_{s,j}$ is the complex-valued locally square summable
function and $Q\left(  x+1\right)  =Q\left(  x\right)  $. It is well-known
that [2, Chap.XIII], [4, 10, 13] the spectrum $\sigma(L(Q))$ of the operator
$L(Q)$ is the union of the spectra of the operators $L_{t}(Q)$ for $t\in
(-\pi,\pi],$ where $L_{t}(Q)$ is the operator generated in $L_{2}^{m}[0,1]$ by
the differential expression (1) and the quasiperiodic conditions $y\left(
1\right)  =e^{it}y\left(  0\right)  ,$ $y^{^{\prime}}\left(  1\right)
=e^{it}y^{^{\prime}}\left(  0\right)  .$ For $t\in(-\pi,\pi]$ the spectra
$\sigma(L_{t}(Q))$ of the operators $L_{t}(Q)$ consist of the eigenvalues
\begin{equation}
\lambda_{1}(t)\leq\lambda_{2}(t)\leq\cdot\cdot\cdot\tag{2}%
\end{equation}
called the Bloch eigenvalues of $L(Q)$. The $n$-th band function $\lambda_{n}$
continuously depends on $t$ and its range
\begin{equation}
I_{n}(Q)=\left\{  \lambda_{n}(t):t\in(-\pi,\pi]\right\}  \tag{3}%
\end{equation}
is called the $n$-th band of the spectrum of $L$:
\[
\sigma(L(Q))=%
{\textstyle\bigcup\limits_{n=1}^{\infty}}
I_{n}(Q).
\]
The continuity of $\lambda_{n}$ in the case $m=1$ was proved in [12]. The
general case follows from the arguments of the perturbation theory described
in [6] and [12]. In the Remark 1 of the next section, for the independence of
this paper, we give a proof of this statement within the framework of this
paper. The bands $I_{n}$ approach infinity as $n\rightarrow\infty.$ The spaces
between the bands $I_{k}$ and $I_{k+1}$ (if exist) for $k=1,2,...,$ are called
the gaps in the spectrum of $L(Q).$

In this paper we investigate the set of the Bloch eigenvalues, bands and gaps
of $L(Q)$. For this first we consider the set of the Bloch eigenvalues of the
operators $L(O)$ and $L(C),$ where $O$ is the $m\times m$ zero matrix and
\begin{equation}
C=\int\limits_{[0,1]}Q\left(  x\right)  dx. \tag{4}%
\end{equation}

It is clear that%

\[
\varphi_{k,1,t}=\left(
\begin{array}
[c]{c}%
e_{k,t}\\
0\\
\vdots\\
0
\end{array}
\right)  ,\text{ }\varphi_{k,2,t}=\left(
\begin{array}
[c]{c}%
0\\
e_{k,t}\\
\vdots\\
0
\end{array}
\right)  ,...,\text{ }\varphi_{k,m,t}=\left(
\begin{array}
[c]{c}%
0\\
\vdots\\
0\\
e_{k,t}%
\end{array}
\right)
\]
are the eigenfunctions of the operator $L_{t}(O)$ corresponding to the
eigenvalue $\left(  2\pi k+t\right)  ^{2}$, where $e_{k,t}(x)=e^{i\left(  2\pi
k+t\right)  x}.$ If $t\neq0,\pi$ then the multiplicity of the eigenvalue
$\left(  2\pi k+t\right)  ^{2}$ is $m$ and the corresponding eigenspace is
\[
E_{k}(t)=Span\left\{  \varphi_{k,1,t},\varphi_{k,2,t},...,\varphi
_{k,m,t}\right\}  .
\]
In the cases $t=0$ and $t=\pi$ the multiplicities of the eigenvalues $\left(
2\pi k\right)  ^{2}$ for $k\in\left(  \mathbb{Z}\backslash\left\{  0\right\}
\right)  $ and $\left(  2\pi k+\pi\right)  ^{2}$ for $k\in\mathbb{Z}$ are $2m$
and the corresponding eigenspaces are%

\[
E_{k}(0)=Span\left\{  \varphi_{n,j,0}:n=k,-k;\text{ }j=1,2,...m\right\}
\]
and%

\[
E_{k}(\pi)=Span\left\{  \varphi_{n,j,\pi}:n=k,-(k+1);\text{ }%
j=1,2,...m\right\}
\]
respectively. Thus the points $\left(  2\pi k\right)  ^{2}$ for $k\in\left(
\mathbb{Z}\backslash\left\{  0\right\}  \right)  $ and $\left(  2\pi
k+\pi\right)  ^{2}$ for $k\in\mathbb{Z}$\ are the exceptional Bloch
eigenvalues of $L(O),$ in the sense that at these points the multiplicities of
the eigenvalues are changed. It is clear that $\left(  2\pi k+t\right)  ^{2}$
is an exceptional Bloch eigenvalue of $L(O)$ if and only if
\begin{equation}
\left(  2\pi k+t\right)  ^{2}=\left(  2\pi n+t\right)  ^{2} \tag{5}%
\end{equation}
for some $n\neq k.$ Since (5) holds only in the cases $t=0,n=-k,$ $k\neq0$ and
$t=\pi,$ $n=-k-1,$ only the points $\left(  2\pi k\right)  ^{2}$ for
$k\in\left(  \mathbb{Z}\backslash\left\{  0\right\}  \right)  $ and $\left(
2\pi k+\pi\right)  ^{2}$ for $k\in\mathbb{Z}$\ are the exceptional Bloch
eigenvalues of $L(O).$

To analyze the set of the Bloch eigenvalues and the spectrum of the operator
$L(C)$ we introduce the following notations, where $C$ is defined in (4).
Denote by $\mu_{1}<\mu_{2}<...<\mu_{p}$ the distinct eigenvalues of the
Hermitian matrix $C$. If the multiplicity of $\mu_{j}$ is $m_{j},$ then
$m_{1}+m_{2}+...+m_{p}=m$. Let $u_{j,1},$ $u_{j,2},...,u_{j,m_{j}}$ be the
eigenvectors of the matrix $C$ corresponding to the eigenvalue $\mu_{j}.$ It
is not hard to see that
\[
\Phi_{k,j,s,t}(x)=u_{j,s}e^{i\left(  2\pi k+t\right)  x}%
\]
for $s=1,2,...,m_{j}$ are the eigenfunctions of $L_{t}(C)$ corresponding to
the eigenvalue
\begin{equation}
\mu_{k,j}(t)=\left(  2\pi k+t\right)  ^{2}+\mu_{j}. \tag{6}%
\end{equation}

\bigskip To consider the spectrum of $L(Q)$ we use the following result of [16].

\textit{Theorem 4}$(a)$\textit{ of [16]. All large eigenvalues of }%
$L_{t}\left(  Q\right)  $\textit{ lie in }$\varepsilon_{k}$\textit{
neighborhood }%
\[
U_{\varepsilon_{k}}(\mu_{k,j}(t)):=(\mu_{k,j}(t)-\varepsilon_{k},\mu
_{k,j}(t)+\varepsilon_{k})
\]
\textit{of the eigenvalues }$\mu_{k,j}(t)$\textit{ of }$L_{t}(C),$\textit{
where }$\varepsilon_{k}=c_{1}(\mid\frac{\ln|k|}{k}\mid+q_{k}),$\textit{ }%
\[
q_{k}=\max\left\{  \left\vert \int\limits_{[0,1]}q_{s,r}\left(  x\right)
e^{-2\pi inx}dx\right\vert :s,r=1,2,...,m;\text{ }n=\pm2k,\pm(2k+1)\right\}
,
\]
$c_{1}$ \textit{is a constant and does not depend on }$t\in(-\pi,\pi
].$\textit{ Moreover, for each large eigenvalue }$\mu_{k,j}(t)$\textit{ of
}$L_{t}(C)$\textit{ there exists an eigenvalue of }$L_{t}(Q)$\textit{ lying in
}$\varepsilon_{k}$\textit{ neighborhood of }$\mu_{k,j}(t)$\textit{.}

Now let's explain a brief outline of this paper. Using Theorem 4$(a)$ of [16],
we first prove that most of the positive real axis is overlapped by $m$ bands
and estimate the length of the gap between the bands (see Theorems 1 and 2).
Then, in order to investigate the spectrum of $L(Q)$ in detail by using the
asymptotic formulas and perturbation theory\ we consider the multiplicities of
the eigenvalues of $L_{t}(C)$ and the large exceptional points of the spectrum
of $L(C).$ We consider the operator $L(Q)$ as perturbation of $L(C)$ by $Q-C$
and prove that the perturbation $Q-C$ may generate the gaps in $\sigma\left(
L(Q)\right)  $ only at the neighborhoods of the exceptional Bloch eigenvalues
(see Theorem 3 and Corollary 2) of $L(C)$. In Theorem 4 we find a condition
(see Condition 1) on the eigenvalues of the matrix $C$ for which the number of
gaps in the spectrum of $L(Q)$ is finite. Note that in [16] we proved Theorem
4 under the assumption that the matrix $C$ has three simple eigenvalues
$\mu_{j_{1}},$ $\mu_{j_{2}}$ and $\mu_{j_{3}}$ satisfying Condition 1. These
assumption simplifies the proof of Theorem 4. In this paper we prove Theorem 4
without any conditions on the multiplicity of these eigenvalues. Finally, note
that in [8, 14, 15] we studied the non-self-adjoint operators with a periodic
matrix potential. This paper can be considered as continuation of the paper
[16], in which the self-adjoint case was investigated. The self-adjoint case,
was considered also in [1], where the main goal was to reformulate some
spectral problems for the differential operator with periodic matrix
coefficients as problems of conformal mapping theory.

Finally note that a great number of paper is devoted to the scalar case
($m=1)$ (see for example the monographs [3] and [7] and the paper [9]). In
this paper we consider the finite-zone potentials in the vectorial case.
Therefore let us only stress the significant difference between the scalar and
vectorial cases in the investigations of the finite-zone potentials. In case
$m=1$ the finite zone potentials are infinitely differentiable functions and
have a special form expressed by Riemann $\theta$ function (see [7, Chapters 8
and 9] and [5]), while in the vectorial case we guarantee finite number of
gaps under simple algebraic condition on the eigenvalue of the matrix $C$.
Moreover, the method used in this paper for the investigation of the vectorial
case is absolutely different from the methods used in the scalar case.

\section{Main Results}

One can easily verify that the set
\[
\left\{  \left(  2\pi k+t\right)  ^{2}:t\in(-\pi,\pi],\text{ }k\in
\mathbb{Z}\right\}
\]
of Bloch eigenvalues $\left(  2\pi k+t\right)  ^{2}$ of $L(O)$ overlap $2m$
times (counting the multiplicity) the half line $(0,\infty)$, since $\left(
2\pi k+t\right)  ^{2}=\left(  -2\pi k-t\right)  ^{2}$ for $t\in(0,\pi)$ and
the multiplicity of the eigenvalues $\left(  2\pi k\right)  ^{2}$ for
$k\in\left(  \mathbb{Z}\backslash\left\{  0\right\}  \right)  $ and $\left(
2\pi k+\pi\right)  ^{2}$ for $k\in\mathbb{Z}$ is $2m.$ Since the both Bloch
eigenvalues $\left(  2\pi k+t\right)  ^{2}$ and $\left(  -2\pi k-t\right)
^{2}$ belong to the same band of $L(O),$ any element of the half line
$(0,\infty)$ is overlapped by $m$ bands of $L(O)$. To investigate this
overlapping problem for the operator $L(Q)$ let us note that the eigenvalues
of $L_{t}(Q)$ numbered in non-decreasing order (see (2)) continuously depend
on $t\in(-\pi,\pi].$

\begin{remark}
Here we prove (within the framework of this paper) that the eigenvalues
$\lambda_{n}(t)$ defined in (2) continuously depend on $t$. The eigenvalues of
the operator $L_{t}(Q)$ are the roots of the characteristic determinant
\[
\Delta(\lambda,t)=\det(U_{v}(Y_{j}))_{j,\nu=1}^{2}=
\]%
\[
e^{i2mt}+f_{1}(\lambda)e^{i(2m-1)t}+f_{2}(\lambda)e^{i(2m-2)t}+...+f_{2m-1}%
(\lambda)e^{it}+1
\]
which is a polynomial of $e^{it}$\ with entire coefficients $f_{1}%
(\lambda),f_{2}(\lambda),...$, where
\[
U_{v}(Y_{j})=Y_{j}^{(\nu-1)}(1,\lambda)-e^{it}Y_{j}^{(\nu-1)}(0,\lambda),
\]
$Y_{1}(x,\lambda)$ and $Y_{2}(x,\lambda)$ are the solutions of the matrix
equation
\[
-Y^{^{\prime\prime}}(x)+Q\left(  x\right)  Y(x)=\lambda Y(x)
\]
satisfying $Y_{1}(0,\lambda)=O$, $Y_{1}^{^{\prime}}(0,\lambda)=I$ and
$Y_{2}(0,\lambda)=I$, $Y_{2}^{^{\prime}}(0,\lambda)=O$ (see [11] Chapter 3).

Now using these statements we prove that for each $n$\ the function
$\lambda_{n}$ defined in (3) is continuous at each point $t_{0}\in(-\pi,\pi].$
Since $\lambda_{n}(t_{0})\rightarrow\infty$ as $n\rightarrow\infty,$ there
exist $k\leq n$ and $p\geq n$ such that $\lambda_{k-1}(t_{0})<\lambda
_{k}(t_{0})=\lambda_{k+1}(t_{0})=...=\lambda_{p}(t_{0})<\lambda_{p+1}(t_{0})$
if $\lambda_{n}(t_{0})>\lambda_{1}(t_{0}).$ Then the boundaries of the
rectangles
\[
R_{1}=\left\{  c<x<d_{1},\text{ }\left\vert y\right\vert <1\right\}  \text{
and }R_{2}=\left\{  c<x<d_{2},\text{ }\left\vert y\right\vert <1\right\}
\]
belong to the resolvent set of the operator $L_{t_{0}}(Q),$ where
$\lambda_{k-1}(t_{0})<d_{1}<\lambda_{k}(t_{0}),$ $\lambda_{p}(t_{0}%
)<d_{2}<\lambda_{p+1}(t_{0})$ and $c$ is a number for which $\sigma
(L)\subset(c,\infty).$ It implies that $\Delta(\lambda,t_{0})\neq0$ for each
$\lambda\in\partial(R_{1}).$ Since $\Delta(\lambda,t_{0})$ is a continuous
function on the compact $\partial(R_{1}),$ there exists $a>0$ such that
$\left\vert \Delta(\lambda,t_{0})\right\vert >a$ for all $\lambda\in
\partial(R_{1}).$ Moreover, $\Delta(\lambda,t)$ is a polynomial of $e^{it}$
with entire coefficients. Therefore, there exists $\delta_{1}>0$ such that
$\left\vert \Delta(\lambda,t)\right\vert >a/2$ for all $t\in(t_{0}-\delta
_{1},t_{0}+\delta_{1})$ and $\lambda\in\partial(R_{1}).$ It means that
$\partial(R_{1})$ belong to the resolvent set of $L_{t}(Q)$ for all
$t\in(t_{0}-\delta_{1},t_{0}+\delta_{1}).$ Moreover
\[
\left(  L_{t}-\lambda I\right)  ^{-1}f(x)=\int_{0}^{1}G(x,\xi,\lambda
,t)f(\xi)d\xi,
\]
where $G(x,\xi,\lambda,t)$ is the Green's function of $L_{t}-\lambda I$
defined by formula%
\[
G(x,\xi,\lambda,t)=g(x,\xi,\lambda)-\frac{1}{\Delta(\lambda,t)}\sum
\limits_{j,v=1}^{2}Y_{j}(x,\lambda)V_{jv}(x,\lambda)U_{v}(g),
\]
(see formula (8) of [7, p.117]). Here $g$ does not depend on $t$ and $V_{jv}$
is the transpose of that mth-order matrix consisting of the cofactor of the
element $U_{v}(Y_{j})$ in the determinant $\det(U_{v}(Y_{j}))_{j,\nu=1}^{2}.$
Hence the entries of the matrices $V_{jv}(x,\lambda)$ and $U_{v}(g)$ either do
not depend on $t$ or have the form $u(1,\lambda)-e^{it}u(0,\lambda)$ and
$h(1,\xi,\lambda)-e^{it}h(0,\xi,\lambda)$ respectively, where the functions
$u$ and $h$ do not depend on $t.$ Therefore using these formulas and the last
inequality for $\left\vert \Delta(\lambda,t)\right\vert $ one can easily
verify that $\left(  L_{t}-\lambda I\right)  ^{-1}$ continuously depend on
$t\in(t_{0}-\delta_{1},t_{0}+\delta_{1})$ for $\lambda\in\partial(R_{1}).$
This implies that the operators $L_{t}$ for each $t\in(t_{0}-\delta_{1}%
,t_{0}+\delta_{1})$ have \ $k-1$ eigenvalues in $R_{1},$ since $L_{t_{0}}$
have \ $k-1$ eigenvalues in $R_{1}$. It is clear that these eigenvalues are
$\lambda_{1}(t),\lambda_{2}(t),...,\lambda_{k-1}(t).$ In the same way we prove
that there exists $\delta_{2}>0$ such that the operators $L_{t}$ for
$t\in(t_{0}-\delta_{2},t_{0}+\delta_{2})$ have \ $p$ eigenvalues in $R_{2}$
and they are $\lambda_{1}(t),\lambda_{2}(t),...,\lambda_{p}(t).$ Thus the
closed rectangle $R=\left\{  d_{1}\leq x\leq d_{2},\text{ }\left\vert
y\right\vert \leq1\right\}  $ contains $p-k+1$ eigenvalues of $L_{t}$ for
$t\in(t_{0}-\delta,t_{0}+\delta)$ and they are $\lambda_{k}(t),\lambda
_{k+1}(t),...,\lambda_{p}(t),$ where $\delta=\min\left\{  \delta_{1}%
,\delta_{2}\right\}  $ and $n\in\lbrack k,p].$

Now we are ready to prove that $\lambda_{n}$ is continuous at the point
$t_{0}$ if $\lambda_{n}(t_{0})>\lambda_{1}(t_{0}).$ Consider any sequence
$\left\{  \left(  \lambda_{n}(t_{k}),t_{k}\right)  :k\in\mathbb{N}\right\}  $
such that $t_{k}\in(t_{0}-\delta,t_{0}+\delta)$ for all $k\in\mathbb{N}$ and
$t_{k}\rightarrow t_{0}$ as $k\rightarrow\infty.$ Let $\left(  \lambda
,t_{0}\right)  $ be any limit point of the sequence $\left\{  \left(
\lambda_{n}(t_{k}),t_{k}\right)  :k\in\mathbb{N}\right\}  .$ Since $\Delta$ is
a continuous function with respect to the pair $(\lambda,t)$ and
$\Delta\left(  \lambda_{n}(t_{k}),t_{k}\right)  =0$ for all $k$ we have
$\Delta(\lambda,t_{0})=0.$ It means that $\lambda$ is an eigenvalue of
$L_{t_{0}}(Q)$ lying in the rectangle $R$, that is, $\lambda=\lambda_{k}%
(t_{0})=$ $\lambda_{k+1}(t_{0})=...=\lambda_{p}(t_{0}),$ where $n\in\lbrack
k,p].$ Thus $\lambda_{n}(t_{k})\rightarrow\lambda_{n}(t_{0})$ as
$k\rightarrow\infty$ for any sequence $\left\{  t_{k}:k\in\mathbb{N}\right\}
$ converging to $t_{0}$ and $\lambda_{n}$ is continuous at the point $t_{0}$
if $\lambda_{n}(t_{0})>\lambda_{1}(t_{0}).$ To prove the case $\lambda
_{n}(t_{0})=\lambda_{1}(t_{0})$ it is enough to consider only the rectangle
$R_{2}=\left\{  c<x<d_{2},\text{ }\left\vert y\right\vert <1\right\}  ,$ where
$\lambda_{1}(t_{0})=\lambda_{2}(t_{0})=...=\lambda_{p}(t_{0})<d_{2}%
<\lambda_{p+1}(t_{0}).$
\end{remark}

First using this Remark and Theorem 4$(a)$ of [16] we consider the overlapping
problem for $L(Q).$ For this in the following remark we explain Theorem 4$(a)$
of [16] for the family of the operators $L(C+\varepsilon(Q-C))$ for
$\varepsilon\in\lbrack0,1].$

\begin{remark}
To prove Theorem 4$(a)$ we used the formula
\begin{equation}
(\lambda_{k,j}(t)-\mu_{n,i}(t))(\Psi_{k,j,t},\Phi_{n,i,t})=((Q(x)-C)\Psi
_{k,j,t},\Phi_{n,i,t}) \tag{7}%
\end{equation}
and proved that
\begin{equation}
\left(  \Psi_{k,j,t}(x),(Q\left(  x\right)  -C)\Phi_{n,i,t}(x)\right)
=O(\frac{\ln|k|}{k})+O(b_{k}). \tag{8}%
\end{equation}
Moreover, the last estimation does not depend on $t$. If the potential $Q$ is
replaced by $C+\varepsilon(Q-C)$ then the formula (7) has the form
\begin{equation}
(\lambda_{k,j}(t)-\mu_{n,i}(t))(\Psi_{k,j,t},\Phi_{n,i,t})=(\varepsilon
(Q(x)-C)\Psi_{k,j,t},\Phi_{n,i,t}) \tag{9}%
\end{equation}
Instead of (7) using (9) and repeating the proof of (8) we get
\[
\left(  \Psi_{k,j,t}(x),\varepsilon(Q\left(  x\right)  -C)\Phi_{n,i,t}%
(x)\right)  =O(\frac{\ln|k|}{k})+O(\varepsilon q_{k}).
\]
Therefore, repeating the proof of Theorem 4$(a)$ of [16] we obtain that
\textit{all large eigenvalues of }$L_{t}(C+\varepsilon(Q-C))$\textit{ }for all
$\varepsilon\in\lbrack0,1]$ \textit{lie in }$\varepsilon_{k}=:c_{1}(\mid
\frac{\ln|k|}{k}\mid+q_{k})$\textit{ neighborhood of the eigenvalues }%
$\mu_{k,j}(t)$\textit{ for }$\left\vert k\right\vert \geq N$\textit{ and
}$j=1,2,...p,$ where the constants $N$ and $c_{1}$ do not depend on $t$ and
$\varepsilon.$
\end{remark}

Now we are ready to prove the following theorem about the overlapping problem.

\begin{theorem}
There exists a positive integer $N_{1}$ such that if $s\geq N_{1}$ then the
intervals
\[
I(s):=\left[  (s\pi)^{2}+\mu_{p}+\varepsilon(s),\left(  s\pi+\pi\right)
^{2}+\mu_{1}-\varepsilon(s)\right]
\]
are contained in each of the bands
\[
I_{sm+1}(Q),I_{sm+2}(Q),...,I_{sm+m}(Q),
\]
where $\varepsilon(s)=\varepsilon_{k}$ if $s\in\left\{  2k,2k+1\right\}
$\ and $\varepsilon_{k}$ is defined in Remark 2.
\end{theorem}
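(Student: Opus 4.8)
The plan is to fix $n\in\{1,\dots,m\}$ and to prove that the continuous function $\lambda_{sm+n}$ (its continuity is exactly Remark 1) takes, at the two endpoints $t=0$ and $t=\pi$, one value not exceeding the left end $(s\pi)^2+\mu_p+\varepsilon(s)$ of $I(s)$ and one value not smaller than the right end $(s\pi+\pi)^2+\mu_1-\varepsilon(s)$. Since $(-\pi,\pi]$ is connected and $\lambda_{sm+n}$ is continuous, the range $I_{sm+n}(Q)$ is an interval; an interval that contains a point at most the left end of $I(s)$ and a point at least the right end must contain all of $I(s)$. Hence $I(s)\subseteq I_{sm+n}(Q)$ for every $n$, which is the assertion. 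Everything therefore reduces to locating the eigenvalue $\lambda_{sm+n}(t)$ of $L_t(Q)$ at $t=0$ and at $t=\pi$.

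First I would carry out this localisation for the unperturbed operator $L(C)$, whose Bloch eigenvalues are given explicitly by $\mu_{k,j}(t)=(2\pi k+t)^2+\mu_j$ with multiplicity $m_j$. At the special values $t=0$ and $t=\pi$ a level and its partner ($-k$ when $t=0$, $-k-1$ when $t=\pi$) coincide, so each group $(s\pi)^2+\mu_j$ or $(s\pi+\pi)^2+\mu_j$ acquires doubled multiplicity $2m_j$. Counting eigenvalues below these groups is then elementary: for $s$ even, say $s=2k$, exactly $(s-1)m$ eigenvalues of $L_0(C)$ lie below the group at $(s\pi)^2$ and exactly $sm$ eigenvalues of $L_\pi(C)$ lie below the group at $(s\pi+\pi)^2$. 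Consequently the eigenvalues with indices $sm+1,\dots,sm+m$ of $L_0(C)$ all sit inside the group $(s\pi)^2+\mu_j$, so they are at most $(s\pi)^2+\mu_p$, whereas the same indices for $L_\pi(C)$ are at least $(s\pi+\pi)^2+\mu_1$. For $s$ odd the monotonicity of the relevant free band is reversed and the roles of $t=0$ and $t=\pi$ are simply interchanged. This is nothing but the free band picture $[(s\pi)^2,(s\pi+\pi)^2]$ of $L(O)$, shifted by the eigenvalues $\mu_j$ of $C$ and occurring in $m$ copies.

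Next I would transfer these two inequalities from $L(C)$ to $L(Q)$ by means of Remark 2. Along the segment $C+\varepsilon(Q-C)$, $\varepsilon\in[0,1]$, every eigenvalue with $|k|\ge N$ stays inside the neighbourhood $U_{\varepsilon_k}(\mu_{k,j}(t))$, uniformly in $t$ and $\varepsilon$. Choosing $N_1\ge N$ so large that the finitely many eigenvalues issuing from $|k|<N$ remain below $(N_1\pi)^2$ for all $t$ and all $\varepsilon\in[0,1]$, no eigenvalue of $L_t(C+\varepsilon(Q-C))$ can cross a threshold placed in the spectral gap once $s\ge N_1$; the integer-valued counting function is then continuous, hence constant, in $\varepsilon$, so $L_t(Q)$ and $L_t(C)$ have the same number of eigenvalues below such a threshold. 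Because every eigenvalue of $L_t(Q)$ attached to the boundary group at $(s\pi)^2$ or $(s\pi+\pi)^2$ differs from the corresponding $\mu_{k,j}(t)$ by at most the radius of that neighbourhood --- which, taking the bound of Remark 2 non-increasing in $k$, is at most $\varepsilon(s)$, since the two boundary groups carry Bloch index $k$ or $k+1$ when $s\in\{2k,2k+1\}$ --- the $L(C)$ bound $\lambda_{sm+n}(0)\le(s\pi)^2+\mu_p$ becomes $\lambda_{sm+n}(0)\le(s\pi)^2+\mu_p+\varepsilon(s)$ for $L(Q)$, and likewise $\lambda_{sm+n}(\pi)\ge(s\pi+\pi)^2+\mu_1-\varepsilon(s)$. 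These are precisely the endpoints of $I(s)$, and enlarging $N_1$ so that $(2s+1)\pi^2>\mu_p-\mu_1+2\varepsilon(s)$ keeps $I(s)$ nonempty.

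The main obstacle is this transfer step rather than the counting or the final application of the intermediate value theorem. The thresholds one naturally uses sit on the very boundary of the $\varepsilon_k$-neighbourhoods, the multiplicities double at $t=0,\pi$, and the finitely many small-$k$ eigenvalues are untouched by Theorem 4$(a)$; reconciling these, and checking that the neighbourhood radii of the two boundary groups are both dominated by $\varepsilon(s)$, is exactly what forces the constant $N_1$ and what pins the index down to $sm+n$ with the sharp buffers $\mp\varepsilon(s)$. Once the count is under control, continuity of $\lambda_{sm+n}$ from Remark 1 and the intermediate value theorem complete the proof at once.
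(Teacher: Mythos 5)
Your proposal is correct and follows essentially the same route as the paper: you count the eigenvalues of $L_{0}(C)$ and $L_{\pi}(C)$ in the doubled groups at $(s\pi)^{2}+\mu_{j}$ and $(s\pi+\pi)^{2}+\mu_{j}$, transfer the counts to $L(Q)$ along the homotopy $C+\varepsilon(Q-C)$ using the uniformity of Remark 2 (your threshold-counting argument is the same mechanism as the paper's Riesz projections over the rectangles $R_{1},R_{2}$), and finish with continuity of $\lambda_{sm+n}$ and the intermediate value theorem, which is exactly the paper's final step of placing $\lambda_{r}(0)$ and $\lambda_{r}(\pi)$ in the intervals (10) and (11). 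Your explicit attention to the $\varepsilon_{k}$ versus $\varepsilon_{k+1}$ radius at the right boundary group when $s=2k+1$ is a small refinement of a point the paper passes over with ``in the same way we prove the case $s=2k+1$,'' not a different method.
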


\begin{proof}
First consider the case $s=2k.$ One can easily verify that the number of the
periodic Bloch eigenvalues of $L(C)$ (the eigenvalues of $L_{0}(C)$ counting
the multiplicity) lying in the interval
\[
\left[  c,\left(  s\pi\right)  ^{2}+\mu_{p}+\varepsilon(s)\right]
\]
is $sm+m,$ where $c$ is a constant such that the spectra of the operators
$L(C+\varepsilon(Q-C))$ are contained in $(c,\infty)$ for all $\varepsilon
\in\lbrack0,1].$ It follows from Remark 2 that there exist constants $N_{1}$
and $c$ such that if $s\geq N_{1},$ then the boundary of the rectangle
\[
R_{1}=\left\{  c<x<\left(  s\pi\right)  ^{2}+\mu_{p}+\varepsilon(s),\text{
}\left\vert y\right\vert <1\right\}
\]
belong to the resolvent set of the operators $L_{0}(C+\varepsilon(Q-C))$ for
all $\varepsilon\in\lbrack0,1]$. Hence, the projection of $L_{0}%
(C+\varepsilon(Q-C))$ defined by contour integration over the boundary of
$R_{1}$ depends continuously on $\varepsilon.$ It implies that the number of
eigenvalues (counting the multiplicity) of $L_{t}(C+\varepsilon(Q-C))$ lying
in $R_{1}$ are the same for all $\varepsilon\in\lbrack0,1].$ Since $L_{0}(C)$
has $sm+m$ eigenvalues (counting the multiplicity) in $R_{1},$ the operator
$L_{0}(Q)$ has also $sm+m$ eigenvalues.

In the same way we prove that the rectangle
\[
R_{2}=\left\{  c<x<\left(  s\pi\right)  ^{2}+\mu_{1}-\varepsilon(s),\text{
}\left\vert y\right\vert <1\right\}
\]
contains $sm-m$ eigenvalues of $L_{0}(Q).$ Therefore the interval
\begin{equation}
\left(  \left(  s\pi\right)  ^{2}+\mu_{1}-\varepsilon(s),\left(  s\pi\right)
^{2}+\mu_{p}+\varepsilon(s)\right)  \tag{10}%
\end{equation}
contains $2m$ periodic eigenvalues \ and they are
\[
\lambda_{sm-m+1}(0)\leq\lambda_{sm-m+2}(0)\leq\cdot\cdot\cdot\leq
\lambda_{sm+m}(0).
\]

In the similar way we prove that the interval
\begin{equation}
\left(  \left(  s\pi+\pi\right)  ^{2}+\mu_{1}-\varepsilon(s),\left(  s\pi
+\pi\right)  ^{2}+\mu_{p}+\varepsilon(s)\right)  \tag{11}%
\end{equation}
contains $2m$ antiperiodic eigenvalues \ and they are
\[
\lambda_{sm+1}(\pi)\leq\lambda_{sm+2}(\pi)\leq\cdot\cdot\cdot\leq
\lambda_{sm+2m}(\pi)
\]
Thus the bands $I_{r}$ for $r=sm+1,sm+2,...,sm+m$ contain the point
$\lambda_{r}(0)$ from interval (10) and the point $\lambda_{r}(\pi)$ from
interval (11). Therefore the bands $I_{sm+1},I_{sm+2},...,I_{sm+m}$ contains
the interval $I(s).$ In the same way we prove the case $s=2k+1.$
\end{proof}

Theorem 1 immediately imply the following.

\begin{corollary}
Any spectral gap $(\alpha,\beta)$ with $\alpha>\left(  \pi N_{1}\right)  ^{2}$
(if exists) is contained in the intervals
\[
U(s):=(\left(  \pi s\right)  ^{2}+\mu_{1}-\varepsilon(s-1),\left(  \pi
s\right)  ^{2}+\mu_{p}+\varepsilon(s)),
\]
for $s>N_{1}$. Moreover, the spectral gap $(\alpha,\beta)\subset U(s)$ lies
between the bands $I_{sm}(Q)$ and $I_{sm+1}(Q).$
\end{corollary}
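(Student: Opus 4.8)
The plan is to deduce the Corollary directly from the covering furnished by Theorem 1, with essentially no new analysis. First I would record the inclusion $I(s)\subseteq\sigma(L(Q))$ for every $s\geq N_1$: this is immediate, since Theorem 1 places $I(s)$ inside each of the $m$ bands $I_{sm+1}(Q),\dots,I_{sm+m}(Q)$, and every band is part of the spectrum. Hence the only candidates for spectral gaps lying above the bottom of $I(N_1)$ are the portions of the half-line left uncovered by the family $\{I(s):s\geq N_1\}$.

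Second, I would identify those uncovered portions by comparing consecutive intervals. Replacing $s$ by $s-1$ in the definition of $I(s)$ and using $(s-1)\pi+\pi=s\pi$, the interval $I(s-1)$ terminates at $(s\pi)^2+\mu_1-\varepsilon(s-1)$, while $I(s)$ begins at $(s\pi)^2+\mu_p+\varepsilon(s)$. Thus the open interval strictly between the end of $I(s-1)$ and the start of $I(s)$ is precisely $U(s)$. Consequently $\bigcup_{s\geq N_1}I(s)$ fills the half-line from the left endpoint of $I(N_1)$ to $+\infty$ except for the open intervals $U(s)$ with $s>N_1$.

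Third, for the localization of a gap: given a gap $(\alpha,\beta)$ whose left endpoint $\alpha$ lies beyond $I(N_1)$, the point $\alpha$ belongs to $\sigma(L(Q))$ but cannot be interior to any $I(s)$, for otherwise a two-sided neighbourhood of $\alpha$ would lie in the spectrum, contradicting that $(\alpha,\beta)$ is a gap. Therefore $\alpha$ lies in the uncovered set described above, i.e. in some $U(s)$ with $s>N_1$. Since the two endpoints of $U(s)$ are themselves endpoints of $I(s-1)$ and $I(s)$ and hence belong to $\sigma(L(Q))$, the gap cannot extend past them, so $(\alpha,\beta)\subseteq U(s)$. For the final assertion I would simply track band indices: $I(s-1)$ sits in the bands up to $I_{(s-1)m+m}=I_{sm}$ and $I(s)$ in the bands from $I_{sm+1}$ onward, so the region $U(s)$—and therefore the gap—lies between $I_{sm}(Q)$ and $I_{sm+1}(Q)$.

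The one step needing care, and which I expect to be the only real obstacle, is the threshold bookkeeping. Theorem 1 guarantees coverage only from the bottom of $I(N_1)$, namely $(N_1\pi)^2+\mu_p+\varepsilon(N_1)$, onward, whereas the hypothesis is phrased as $\alpha>(\pi N_1)^2$. One must check that a gap-start satisfying $\alpha>(\pi N_1)^2$ genuinely lies beyond $I(N_1)$, so that the relevant index is $s>N_1$ rather than $s=N_1$. This is immediate once $\alpha$ exceeds the right endpoint of $U(N_1)$, i.e. the bottom of $I(N_1)$; the residual sliver between $(\pi N_1)^2$ and that bottom (present only when $\mu_p+\varepsilon(N_1)>0$) must be absorbed either by strengthening the threshold to the bottom of $I(N_1)$ or by admitting $s=N_1$ in the boundary case. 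I view this as combinatorial bookkeeping rather than an analytic difficulty, since all the spectral content is already packaged in Theorem 1.
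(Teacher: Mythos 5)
Your proposal is correct and takes essentially the same route as the paper's own proof: from Theorem 1 the intervals $I(s)$ lie in $\sigma(L(Q))$, the uncovered set between consecutive $I(s-1)$ and $I(s)$ is exactly $U(s)$ (forcing $(\alpha,\beta)\subset U(s)$ since gap endpoints belong to the spectrum), and the bands $I_{sm+1}(Q),\dots,I_{sm+m}(Q)$ overlapping on $I(s)$ leave room for a gap only between $I_{sm}(Q)$ and $I_{sm+1}(Q)$. Your closing threshold observation is a legitimate refinement rather than a defect: the paper silently ignores the sliver of $U(N_1)$ above $(\pi N_1)^2$ (present when $\mu_p+\varepsilon(N_1)>0$), where a gap would correspond to $s=N_1$, so strictly the statement should either strengthen the hypothesis on $\alpha$ or allow $s\geq N_1$.
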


\begin{proof}
By Theorem 1 the intervals $I(s)$ for $s\geq N_{1}$ are the subsets of the
spectrum. Therefore the spectral gap $(\alpha,\beta)$ with $\alpha>\left(  \pi
N_{1}\right)  ^{2}$ is contained between $I(s-1)$ and $I(s)$ for some
$s>N_{1}.$ It means that $(\alpha,\beta)\subset U(s).$ The bands $I_{sm+j}(Q)$
and $I_{sm+j+1}(Q)$ for $j=1,2,...,m-1$ have common intervals $I(s)$ and hence
there is not gaps between they. It means that the gap $(\alpha,\beta)$ is
located between the bands $I_{ms}$ and $I_{ms+1}.$
\end{proof}

Now using Corollary 1 and Theorem 4$(a)$ of [16] we prove the following.

\begin{theorem}
The length of the spectral gaps $(\alpha,\beta)$ lying in $U(s)$ for $s>N_{1}$
is not greater than $2\max\left\{  \varepsilon(s-1),\varepsilon(s)\right\}  .$
\end{theorem}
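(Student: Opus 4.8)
The plan is to combine the structural information from Corollary 1 with the width of the containing interval $U(s)$. By Corollary 1, any spectral gap $(\alpha,\beta)$ with $\alpha > (\pi N_1)^2$ that lies in $U(s)$ is located between the bands $I_{sm}(Q)$ and $I_{sm+1}(Q)$, and is contained in
\[
U(s)=\left((\pi s)^2+\mu_1-\varepsilon(s-1),\ (\pi s)^2+\mu_p+\varepsilon(s)\right).
\]
Since the gap $(\alpha,\beta)$ is a subset of $U(s)$, its length $\beta-\alpha$ is certainly bounded by the length of $U(s)$, which equals $\mu_p-\mu_1+\varepsilon(s-1)+\varepsilon(s)$. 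However, this crude bound carries the spread $\mu_p-\mu_1$ of the eigenvalues of $C$, which is far too large for the claimed estimate. So the first thing I would do is extract a sharper localization: the endpoints $\alpha$ and $\beta$ are not free to roam across all of $U(s)$, because $\alpha$ is a right endpoint of the band $I_{sm}$ and $\beta$ is a left endpoint of $I_{sm+1}$, and each of these bands must reach close to the appropriate $\mu_{k,j}(t)$ values.

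The key step is therefore to pin down more precisely where $\alpha$ and $\beta$ can lie inside $U(s)$. Here I would invoke Theorem 4$(a)$ of [16]: all large eigenvalues of $L_t(Q)$ lie in the $\varepsilon_k$-neighborhoods $U_{\varepsilon_k}(\mu_{k,j}(t))$ of the eigenvalues $\mu_{k,j}(t)=(2\pi k+t)^2+\mu_j$ of $L_t(C)$, and conversely each such $\mu_{k,j}(t)$ has an eigenvalue of $L_t(Q)$ nearby. The upshot is that the spectrum inside $U(s)$ is squeezed into the union of the $\varepsilon(\cdot)$-neighborhoods of the points $(\pi s)^2+\mu_j$ for $j=1,\dots,p$. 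A genuine gap $(\alpha,\beta)$ can only open up in the portion of $U(s)$ that lies \emph{outside} all these neighborhoods, i.e.\ in one of the intervals strictly between consecutive clusters $(\pi s)^2+\mu_j$ and $(\pi s)^2+\mu_{j+1}$; and since the gap sits between $I_{sm}$ and $I_{sm+1}$, it can only be the one adjacent to a single cluster boundary. This is the step I expect to be the main obstacle, because one must argue carefully that the gap cannot straddle one of the clusters $(\pi s)^2+\mu_j$: by the converse part of Theorem 4$(a)$, every $\mu_{k,j}(t)$ is within $\varepsilon_k$ of an actual Bloch eigenvalue, so the spectrum genuinely touches each cluster and no gap can contain a whole cluster point in its interior.

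Once the gap is confined to a single inter-cluster region, the estimate follows by measuring endpoints against the two cluster boundaries bracketing that region. The left endpoint $\alpha$, being the top of a band of $L(Q)$ (an eigenvalue of some $L_t(Q)$), lies within $\varepsilon(s)$ above the cluster $\mu_{k,j}(t)$ just below it, and the right endpoint $\beta$, being the bottom of the next band, lies within $\varepsilon(s-1)$ below the cluster just above it. Combining these two one-sided bounds gives $\beta-\alpha \leq \varepsilon(s-1)+\varepsilon(s) \leq 2\max\{\varepsilon(s-1),\varepsilon(s)\}$. The remaining work is purely bookkeeping: tracking which of the two indices $s-1$ or $s$ governs each neighborhood (the appearance of both $\varepsilon(s-1)$ and $\varepsilon(s)$ in the definition of $U(s)$ reflects exactly this, via the convention $\varepsilon(s)=\varepsilon_k$ for $s\in\{2k,2k+1\}$), and confirming that the constants in Theorem 4$(a)$ are uniform in $t\in(-\pi,\pi]$ so that the bound holds for the full band and not merely for the periodic or antiperiodic eigenvalues.
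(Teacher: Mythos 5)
Your proposal contains a genuine structural error: the claim that Theorem 4$(a)$ of [16] squeezes the spectrum of $L(Q)$ inside $U(s)$ into the $\varepsilon(\cdot)$-neighborhoods of the \emph{fixed} points $(\pi s)^{2}+\mu_{j}$, leaving spectrum-free ``inter-cluster'' corridors where a gap must sit. That theorem localizes the eigenvalues of $L_{t}(Q)$ near the $t$-dependent quantities $\mu_{k,j}(t)=(2\pi k+t)^{2}+\mu_{j}$, and as $t$ ranges over $(-\pi,\pi]$ these sweep out the whole half-line: already for a single $j$ one has $\left\{ (2\pi k+t)^{2}+\mu_{j}:k\in\mathbb{Z},\ t\in(-\pi,\pi]\right\} =[\mu_{j},\infty)$. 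So there are no fixed clusters and no empty corridors between them; the union over $t$ of the comparison points fills $U(s)$ continuously. Indeed the paper's own Corollary 2 shows that the gaps are located near the half-sums $(\pi k)^{2}+\frac{\mu_{i}+\mu_{j}}{2}$, which are generically far from every point $(\pi s)^{2}+\mu_{j}$ --- band edges for matrix potentials need not occur at $t=0,\pi$, which is exactly where your cluster picture implicitly places them. Your endpoint bookkeeping is also internally inconsistent: if $\alpha$ lies within $\varepsilon(s)$ \emph{above} one cluster and $\beta$ within $\varepsilon(s-1)$ \emph{below} the next distinct cluster, this forces $\beta-\alpha\geq\mu_{j+1}-\mu_{j}-\varepsilon(s-1)-\varepsilon(s)$, a lower bound of the order of the cluster separation, not the upper bound $\varepsilon(s-1)+\varepsilon(s)$ you conclude.

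The correct argument --- the paper's --- is much shorter and uses precisely the continuum of comparison points that your picture discards. Suppose $\beta-\alpha>2\max\left\{ \varepsilon(s-1),\varepsilon(s)\right\} $. By Corollary 1 the midpoint satisfies $\frac{\alpha+\beta}{2}\in\left( (s\pi)^{2}+\mu_{1},(s\pi)^{2}+\mu_{p}\right) $, and since the sets $\left\{ (2\pi k+t)^{2}+\mu_{j}\right\} $ cover this range, there exist $t\in(-\pi,\pi]$ and $j$ with $\mu_{k,j}(t)=\frac{\alpha+\beta}{2}$ \emph{exactly}. The converse clause of Theorem 4$(a)$ (the ``Moreover'' part, which you correctly recalled but applied only at the cluster points) then supplies an eigenvalue $\lambda$ of $L_{t}(Q)$ with $|\lambda-\frac{\alpha+\beta}{2}|\leq\max\left\{ \varepsilon(s-1),\varepsilon(s)\right\} <\frac{\beta-\alpha}{2}$, so $\lambda\in(\alpha,\beta)$, contradicting that $(\alpha,\beta)$ is a gap. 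The missing idea in your proposal is exactly this: apply the converse direction not at finitely many special points but at the midpoint of the alleged gap, which is always attainable as an unperturbed Bloch eigenvalue.
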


\begin{proof}
By Corollary 1 we have
\[
(\left(  s\pi\right)  ^{2}+\mu_{1}-\varepsilon(s-1)\leq\alpha<\beta\leq\left(
\pi s\right)  ^{2}+\mu_{p}+\varepsilon(s).
\]
Now suppose on the contrary that the length of the gap $(\alpha,\beta)$ is
greater than $2\max\left\{  \varepsilon(s-1),\varepsilon(s)\right\}  .$ Then
it follows from last equalities that
\[
\frac{\alpha+\beta}{2}\in(\left(  s\pi\right)  ^{2}+\mu_{1},\left(  \pi
s\right)  ^{2}+\mu_{p}),\text{ }\frac{\beta-\alpha}{2}>\max\left\{
\varepsilon(s-1),\varepsilon(s)\right\}
\]
Using (6) one can easily conclude that there exist $t\in(-\pi,\pi]$ and
$j\in\left\{  1,2,...,p\right\}  $ such that the equality $\mu_{s,j}%
(t)=\frac{\alpha+\beta}{2}$ holds. On the other hands, by Theorem 4$(a)$ of
[16] there exists an eigenvalue $\lambda$ of $L_{t}(Q)$ lying in $\max\left\{
\varepsilon(s-1),\varepsilon(s)\right\}  $ neighborhood of $\frac{\alpha
+\beta}{2}.$ Therefore
\[
\lambda\in\left(  \frac{\alpha+\beta}{2}-\frac{\beta-\alpha}{2},\frac
{\alpha+\beta}{2}+\frac{\beta-\alpha}{2}\right)  =(\alpha,\beta).
\]
Hence $(\alpha,\beta)$ is not a gap in the spectrum of $L(Q).$ This
contradiction imply the proof of the theorem.
\end{proof}

To investigate the spectrum of $L(Q)$ in detail by using the asymptotic
formulas and perturbation theory\ we need to consider the multiplicities of
the eigenvalues of $L_{t}(C)$ and the exceptional points of the spectrum of
$L(C).$ The multiplicity of $\mu_{k,j}(t)$ is $m_{j}$ if $\mu_{k,j}(t)\neq
\mu_{n,i}(t)$ for all $(n,i)\neq(k,j).$ The multiplicity of $\mu_{k,j}(t)$ is
changed, that is, $\mu_{k,j}(t)$ is an exceptional point of the spectrum of
$L(C)$ if
\begin{equation}
\left(  2\pi k+t\right)  ^{2}+\mu_{j}=\left(  2\pi n+t\right)  ^{2}+\mu_{i}
\tag{12}%
\end{equation}
for some $(n,i)\neq(k,j).$ Since $\left(  2\pi k+t\right)  ^{2}=\left(  -2\pi
k-t\right)  ^{2}$, it is enough to study the equality (12) for $t\in
\lbrack0,\pi]$ and $k\in\mathbb{Z}.$ Moreover, we investigate the large
exceptional Bloch eigenvalues $\left(  2\pi k+t\right)  ^{2}+\mu_{j}$ of
$\sigma(L(C)),$ because we are going to investigate the spectrum of the
operator $L(Q)$ by using the asymptotic formulas for the large eigenvalues. In
other words, we need to consider (12) in the case when $\left\vert
k\right\vert $ is a large number. Then (12) has a solution $t\in\lbrack0,\pi]$
only in the cases $n=-k$ and $n=-k-1.$ In these cases (12) implies that the
large eigenvalue $\left(  2\pi k+t\right)  ^{2}+\mu_{j}$ is an exceptional
Bloch eigenvalue of $L(C)$ if either
\begin{equation}
\mu_{k,j}(t)-\mu_{-k,i}(t)=8\pi kt+\mu_{j}-\mu_{i}=0 \tag{13}%
\end{equation}
or
\begin{equation}
\mu_{k,j}(t)-\mu_{-k-1,i}(t)=4\pi(2k+1)\left(  t-\pi\right)  +\mu_{j}-\mu
_{i}=0 \tag{14}%
\end{equation}
for some $i=1,2,...,p.$ We denote by
\[
t(2k,j,i)=\frac{\mu_{i}-\mu_{j}}{4\pi(2k)}%
\]
and
\[
t(2k+1,j,i)=\pi+\frac{\mu_{i}-\mu_{j}}{4\pi(2k+1)}%
\]
the solutions of equations (13) and (14) lying in $[0,\pi]$. Thus eliminating
the sets $\ $ $\left\{  t(2k,j,i):i=1,2,...,p\right\}  $ and $\left\{
t(2k+1,j,i):i=1,2,...,p\right\}  $ from $[0,\pi]$ we conclude that, if $t$
belongs to the remaining part of $[0,\pi],$ then the multiplicity of the
eigenvalue $\mu_{k,j}(t)$ is $m_{j}$, where $k$ is a large number. In other
word, $\mu_{k,j}(t)$ is a non-exceptional Bloch eigenvalue of $L(C).$ However,
to investigate the perturbation of these non-exceptional Bloch eigenvalues by
using the asymptotic formulas obtained in [16], we eliminate $\delta_{k}%
$-neighborhoods $U_{\delta_{k}}(t(2k,j,i))$ and $U_{\delta_{k}}(t(2k+1,j,i))$
of $t(2k,j,i)$ and $t(2k+1,j,i)$ from $[0,\pi],$ where $\delta_{k}=o(k^{-1}).$
Moreover, $\delta_{k}$ can be chosen so that the remaining part of $[0,\pi]$
consists of the pairwise disjoint intervals $\left[  a(k,j,s),b(k,j,s)\right]
$ for $s=1,2,...,v:$%

\begin{equation}
\lbrack0,\pi]\backslash\left(
{\textstyle\bigcup\limits_{i=1,2,...,p}}
\left(  U_{\delta_{k}}(t(2k,j,i))\cup U_{\delta_{k}}(t(2k+1,j,i))\right)
\right)  = \tag{15}%
\end{equation}%
\[%
{\textstyle\bigcup\limits_{s=1}^{v}}
\left[  a(k,j,s),b(k,j,s)\right]  ,
\]
where $a(k,j,s)<b(k,j,s)<a(k,j,s+1)<b(k,j,s+1)$ for $s=1,2,...,v.$ These
intervals have the following property.

\begin{lemma}
There exists $N_{2}$ such that if $\mid$ $k\mid>N_{2}$,
\begin{equation}
4\pi(2\left\vert k\right\vert -2)\delta_{k}=2\max\left\{  \varepsilon
_{k},\varepsilon_{-k},\varepsilon_{-k-1}\right\}  , \tag{16}%
\end{equation}
and the quasimomentum $t$ belongs to the intervals $\left[
a(k,j,s),b(k,j,s)\right]  $ defined in (15), then the following statements hold.

$(a)$ The inequality
\begin{equation}
\left\vert \mu_{k,j}(t)-\mu_{n,i}(t)\right\vert \geq4\pi(2\left\vert
k\right\vert -1)\delta_{k} \tag{17}%
\end{equation}
holds for $n=-k,-k-1$ and for all $i=1,2,...,p.$

$(b)$ The closed interval%
\[
\overline{U_{\varepsilon_{k}}(\mu_{k,j}(t))}=\left[  \mu_{k,j}(t)-\varepsilon
_{k},\mu_{k,j}(t)+\varepsilon_{k}\right]
\]
has no common points with $\overline{U_{\varepsilon_{n}}(\mu_{n,i}(t))}\ $for
$\mid$ $n\mid\geq N_{2}$ and $(n,i)\neq\left(  k,j\right)  $.
\end{lemma}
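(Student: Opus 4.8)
The plan is to read both parts off the factorisations that define the exceptional quasimomenta. For $(a)$, I would first rewrite the two relevant differences using (13) and (14): since $t(2k,j,i)$ is the root of $8\pi kt+\mu_j-\mu_i$ and $t(2k+1,j,i)$ the root of $4\pi(2k+1)(t-\pi)+\mu_j-\mu_i$, one has $\mu_{k,j}(t)-\mu_{-k,i}(t)=8\pi k(t-t(2k,j,i))$ and $\mu_{k,j}(t)-\mu_{-k-1,i}(t)=4\pi(2k+1)(t-t(2k+1,j,i))$. By (15) an admissible $t$ has been deleted from the open $\delta_k$-neighbourhood of each $t(2k,j,i)$ and each $t(2k+1,j,i)$, so $|t-t(2k,j,i)|\ge\delta_k$ and $|t-t(2k+1,j,i)|\ge\delta_k$ for every $i$ (this needs no information about whether the centres lie in $[0,\pi]$). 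Since $|8\pi k|=4\pi(2|k|)\ge4\pi(2|k|-1)$ and $|4\pi(2k+1)|=4\pi|2k+1|\ge4\pi(2|k|-1)$ (because $|2k+1|\ge2|k|-1$), both differences are bounded below by $4\pi(2|k|-1)\delta_k$, which is exactly (17).

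For $(b)$ I would use that $\overline{U_{\varepsilon_k}(\mu_{k,j}(t))}$ and $\overline{U_{\varepsilon_n}(\mu_{n,i}(t))}$ are disjoint precisely when $|\mu_{k,j}(t)-\mu_{n,i}(t)|>\varepsilon_k+\varepsilon_n$, and then verify this in three regimes. When $n=k$ (hence $i\neq j$) the difference is the constant $|\mu_i-\mu_j|\ge d:=\min_{i\neq j}|\mu_i-\mu_j|>0$, while $\varepsilon_k+\varepsilon_n=2\varepsilon_k\to0$, so it suffices to choose $N_2$ with $2\varepsilon_k<d$ for $|k|>N_2$. When $n\in\{-k,-k-1\}$ I would combine $(a)$ with the normalisation (16): since $\varepsilon_k+\varepsilon_n\le2\max\{\varepsilon_k,\varepsilon_{-k},\varepsilon_{-k-1}\}=4\pi(2|k|-2)\delta_k$ while (17) yields the strictly larger bound $4\pi(2|k|-1)\delta_k$,
\[
|\mu_{k,j}(t)-\mu_{n,i}(t)|\ge4\pi(2|k|-1)\delta_k>4\pi(2|k|-2)\delta_k\ge\varepsilon_k+\varepsilon_n,
\]
which is the required strict separation.

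The last regime $n\notin\{k,-k,-k-1\}$ with $|n|\ge N_2$ is the one to settle by a direct estimate. Writing $\mu_{k,j}(t)-\mu_{n,i}(t)=(2\pi k+t)^2-(2\pi n+t)^2+\mu_j-\mu_i$ and factoring the difference of squares as $4\pi(k-n)(\pi(k+n)+t)$, the excluded values $n=-k,-k-1$ are exactly those for which the second factor can vanish on $[0,\pi]$; for every other $n$ one has $|\pi(k+n)+t|\ge\pi(|k+n|-1)$ when $k,n$ have equal signs and $|\pi(k+n)+t|\ge\pi$ together with $|k-n|=|k|+|n|$ when they have opposite signs, and in either case a short computation gives $|(2\pi k+t)^2-(2\pi n+t)^2|\ge4\pi^2(|k|+|n|-1)$. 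Hence $|\mu_{k,j}(t)-\mu_{n,i}(t)|\ge4\pi^2(|k|+|n|-1)-(\mu_p-\mu_1)$, which tends to infinity with $|k|+|n|$ and therefore exceeds the bounded quantity $\varepsilon_k+\varepsilon_n$ once $N_2$ is large.

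The main obstacle, and the reason the lemma is phrased through the exact relation (16), is the near-degenerate regime $n\in\{-k,-k-1\}$, where the unperturbed bands $\mu_{k,j}(t)$ and $\mu_{n,i}(t)$ genuinely cross at the deleted quasimomenta; there the separation cannot be had for free but must be manufactured by deleting neighbourhoods wide enough to beat $\varepsilon_k+\varepsilon_n$, the one-unit gap between the factors $2|k|-1$ and $2|k|-2$ in (17) and (16) furnishing precisely the margin. At the same time $\delta_k$ must remain $o(k^{-1})$; I would confirm this consistency by noting that $\max\{\varepsilon_k,\varepsilon_{-k},\varepsilon_{-k-1}\}\to0$, so that (16) gives $\delta_k=\max\{\varepsilon_k,\varepsilon_{-k},\varepsilon_{-k-1}\}/(4\pi(|k|-1))=o(k^{-1})$. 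The other two regimes are robust and merely dictate how large $N_2$ must be taken.
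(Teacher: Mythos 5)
Your proof is correct and takes essentially the same route as the paper: part $(a)$ via the linear factorisation of $\mu_{k,j}(t)-\mu_{-k,i}(t)$ and $\mu_{k,j}(t)-\mu_{-k-1,i}(t)$ about the deleted points $t(2k,j,i)$ and $t(2k+1,j,i)$ (the paper phrases this as the functions vanishing there with constant derivatives $8\pi k$ and $4\pi(2k+1)$), and part $(b)$ via the identical three regimes $n=k$, $n\in\{-k,-k-1\}$ (playing (16) against (17) exactly as the paper does), and $n\notin\{k,-k,-k-1\}$. The only divergence is that you make explicit, through the difference-of-squares factorisation and the bound $4\pi^{2}(|k|+|n|-1)-(\mu_{p}-\mu_{1})$, the separation estimate that the paper merely asserts by saying $\left\vert \mu_{k,j}(t)-\mu_{n,i}(t)\right\vert$ ``is a large number.''
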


\begin{proof}
$(a)$ Introduce the notations $f(t)=\mu_{k,j}(t)-\mu_{-k,i}(t)$ and
$g(t)=\mu_{k,j}(t)-\mu_{-k-1,i}(t).$ By the definition of $t(2k,j,i)$ and
$t(2k+1,j,i)$ we have $f(t(2k,j,i))=0$ and $g(t(2k+1,j,i))=0$ (see (13) and
(14)). On the other hand, the derivatives of the functions $f$ and $g$ are
$8\pi k$ and $4\pi(2k+1)$ respectively. Therefore if $t$ does not belong to
the $\delta_{k}$-neighborhood of $t(2k,j,i)$ and $t(2k+1,j,i)$ for
$i=1,2,...,p$ that is, if $t$ belong to the intervals $\left[
a(k,j,s),b(k,j,s)\right]  $, then $\left\vert f(t)\right\vert \geq
8\pi\left\vert k\right\vert \delta_{k}$ and $\left\vert g(t)\right\vert
\geq4\pi\left\vert 2k+1\right\vert \delta_{k}.$ These inequalities imply (17).

$(b)$ If (16) holds then it follows from (17) that the distance $\left\vert
\mu_{k,j}(t)-\mu_{n,i}(t)\right\vert $ between the centres of the intervals
$U_{\varepsilon_{k}}(\mu_{k,j}(t))$ and $U_{\varepsilon_{n}}(\mu_{n,i}(t))$ is
greater than the total sum $\varepsilon_{k}+\varepsilon_{n}$ of the radii of
these intervals for $n=-k,-k-1$. Therefore $\overline{U_{\varepsilon_{k}}%
(\mu_{k,j}(t))}$ has no common points with the intervals $\overline
{U_{\varepsilon_{n}}(\mu_{n,i}(t))}\ $for $n=-k,-k-1$. Similarly, if $n\neq
k,-k,-k-1,$ $\mid$ $k\mid>N_{2},$ $\mid$ $n\mid\geq N_{2}$ and $t\in
\lbrack0,\pi]$ then $\left\vert \mu_{k,j}(t)-\mu_{n,i}(t)\right\vert $ is a
large number and hence is greater than $\varepsilon_{k}+\varepsilon_{n}.$ If
$n=k$ and $i\neq j,$ then $\left\vert \mu_{k,j}(t)-\mu_{n,i}(t)\right\vert
=\left\vert \mu_{j}-\mu_{i}\right\vert >\varepsilon_{k}+\varepsilon_{n},$
since $\varepsilon_{k}\rightarrow0$ as $k\rightarrow\infty.$ The lemma is proved.
\end{proof}

Now using this lemma we consider the spectrum of $L(Q).$

\begin{theorem}
Let $j=1,2,...,p$ be fixed. For any interval $\left[  a,b\right]  :=\left[
a(k,j,s),b(k,j,s)\right]  $ of (15) the following statements hold.

$(a)$ If $t\in\lbrack a,b],$ then the operator $L_{t}(Q)$ has $m_{j}$
eigenvalues (counting the multiplicity) lying in the interval $U_{\varepsilon
_{k}}(\mu_{k,j}(t))=\left(  \mu_{k,j}(t)-\varepsilon_{k},\mu_{k,j}%
(t)+\varepsilon_{k}\right)  .$

$(b)$ There exists $l$ such that the eigenvalues of $L_{t}(Q)$ lying in
$U_{\varepsilon_{k}}(\mu_{k,j}(t))$ are $\lambda_{l+1}(t),\lambda
_{l+2}(t),...,\lambda_{l+m_{j}}(t)$ for all $t\in\lbrack a,b].$

$(c)$ If $k>0$ ($k<0$), then the interval $\left[  \mu_{k,j}(a)+\varepsilon
_{k},\mu_{k,j}(b)-\varepsilon_{k}\right]  $

($\left[  \mu_{k,j}(b)+\varepsilon_{k},\mu_{k,j}(a)-\varepsilon_{k}\right]  $)
is a subset of the bands $\Gamma_{l+1},$ $\Gamma_{l+2},...,\Gamma_{l+m_{j}}$
of the spectrum of $L(Q).$
\end{theorem}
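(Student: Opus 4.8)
The plan is to prove the three parts in order, the core being a contour--integration (Riesz projection) argument that transports the eigenvalue count from $L_t(C)$ to $L_t(Q)$, after which parts $(b)$ and $(c)$ are continuity and intermediate-value arguments.

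\textbf{Part (a).} For fixed $t\in[a,b]$ I would use the same homotopy $\varepsilon\mapsto L_t(C+\varepsilon(Q-C))$, $\varepsilon\in[0,1]$, employed in the proof of Theorem~1. By Lemma~1$(b)$ the closed interval $\overline{U_{\varepsilon_k}(\mu_{k,j}(t))}$ is disjoint from every $\overline{U_{\varepsilon_n}(\mu_{n,i}(t))}$ with $(n,i)\neq(k,j)$, so it can be surrounded by a rectangular contour $\gamma$ lying in the gap between it and the neighbouring neighbourhoods. By Remark~2 all large eigenvalues of $L_t(C+\varepsilon(Q-C))$ stay inside $\bigcup_{n,i}U_{\varepsilon_n}(\mu_{n,i}(t))$ for every $\varepsilon\in[0,1]$; hence $\gamma$ avoids the spectrum of $L_t(C+\varepsilon(Q-C))$ for all $\varepsilon$, and the Riesz projection obtained by integrating the resolvent over $\gamma$ depends continuously on $\varepsilon$ (the resolvent continuity being the one already established in Remark~1, now read off in $\varepsilon$). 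Its rank is integer valued, hence constant along the homotopy; at $\varepsilon=0$ the only enclosed eigenvalue is $\mu_{k,j}(t)$, of multiplicity $m_j$, so at $\varepsilon=1$ the operator $L_t(Q)$ has exactly $m_j$ eigenvalues inside $\gamma$, and by Remark~2 these lie in the open interval $U_{\varepsilon_k}(\mu_{k,j}(t))$.

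\textbf{Part (b).} By part $(a)$ and Lemma~1$(b)$, for each $t\in[a,b]$ the interval $U_{\varepsilon_k}(\mu_{k,j}(t))$ contains precisely $m_j$ eigenvalues and is separated from all remaining eigenvalues of $L_t(Q)$; thus these $m_j$ eigenvalues form a consecutive block $\lambda_{l(t)+1}(t),\dots,\lambda_{l(t)+m_j}(t)$, where $l(t)$ is the number of eigenvalues of $L_t(Q)$ below $\mu_{k,j}(t)-\varepsilon_k$. It remains to show $l(t)$ is constant on $[a,b]$. The level $\mu_{k,j}(t)-\varepsilon_k$ is never an eigenvalue of $L_t(Q)$: by part $(a)$ the eigenvalues inside $U_{\varepsilon_k}(\mu_{k,j}(t))$ are strictly larger, every other large eigenvalue lies in some $U_{\varepsilon_n}(\mu_{n,i}(t))$ whose closure misses this point by Lemma~1$(b)$, and the finitely many small eigenvalues lie far below it. Since $\mu_{k,j}(t)-\varepsilon_k$ depends continuously on $t$ and the $\lambda_n(t)$ are continuous (Remark~1), the integer $l(t)$ cannot jump and is therefore constant, say equal to $l$; equivalently one counts the eigenvalues enclosed by the rectangle $\{c-1\le x\le\mu_{k,j}(t)-\varepsilon_k,\ |y|\le1\}$ (with $c$ below the spectrum) and invokes continuity of that contour integral in $t$.

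\textbf{Part (c).} Fix $r\in\{l+1,\dots,l+m_j\}$. By part $(b)$, $\lambda_r(t)\in(\mu_{k,j}(t)-\varepsilon_k,\mu_{k,j}(t)+\varepsilon_k)$ for all $t\in[a,b]$, and $\lambda_r$ is continuous there. For $k>0$ the function $\mu_{k,j}(t)=(2\pi k+t)^2+\mu_j$ is increasing on $[0,\pi]$, so $\lambda_r(a)<\mu_{k,j}(a)+\varepsilon_k$ and $\lambda_r(b)>\mu_{k,j}(b)-\varepsilon_k$; whenever the target interval is nonempty these give $\lambda_r(a)<\mu_{k,j}(a)+\varepsilon_k\le\mu_{k,j}(b)-\varepsilon_k<\lambda_r(b)$, and the intermediate value theorem yields $[\mu_{k,j}(a)+\varepsilon_k,\mu_{k,j}(b)-\varepsilon_k]\subseteq\lambda_r([a,b])\subseteq\Gamma_r$. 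Since this holds for every $r$ in the block, the interval lies in each of $\Gamma_{l+1},\dots,\Gamma_{l+m_j}$. The case $k<0$ is identical, except that $\mu_{k,j}$ is decreasing on $[0,\pi]$, so the roles of $a$ and $b$ interchange and one argues with $[\mu_{k,j}(b)+\varepsilon_k,\mu_{k,j}(a)-\varepsilon_k]$. I expect the only genuinely delicate point to be part $(a)$: guaranteeing that $\gamma$ stays in the resolvent set of $L_t(C+\varepsilon(Q-C))$ \emph{uniformly} in $\varepsilon\in[0,1]$ and that the Riesz projection varies continuously in $\varepsilon$, which rests on combining the $\varepsilon$-uniform localization of Remark~2, the disjointness in Lemma~1$(b)$, and the resolvent continuity of Remark~1.
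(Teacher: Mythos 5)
Your proposal is correct and follows essentially the same route as the paper: part $(a)$ via the homotopy $L_t(C+\varepsilon(Q-C))$ with the contour kept in the resolvent set uniformly in $\varepsilon$ by Remark~2 and Lemma~1$(b)$, part $(b)$ by showing the index $l(t)$ is an integer-valued, locally constant function of $t$ on the connected interval $[a,b]$, and part $(c)$ by the intermediate value theorem applied to each $\lambda_{l+s}$ pinned inside the moving window $U_{\varepsilon_k}(\mu_{k,j}(t))$. The only cosmetic difference is in $(b)$, where the paper derives local constancy from the strict inequalities $\lambda_{l(t)}(t)<\lambda_{l(t)+s}(t)<\lambda_{l(t)+m_j+1}(t)$ and then chains a finite subcover of $[a,b]$, while you phrase it as the eigenvalue count below the continuously moving level $\mu_{k,j}(t)-\varepsilon_k$, which never meets the spectrum; these are the same continuity argument in different clothing.
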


\begin{proof}
$(a)$ Theorem 4$(a)$ of [16] implies that there exist $n$ and $N$ such that
the eigenvalues $\lambda_{s}(t)$ for $s>n$ lie in $U_{\varepsilon_{k}}%
(\mu_{k,i}(t))$ for $\mid$ $k\mid>N$ and $i=1,2,...,p.$ On the other hand, by
Lemma 1 the integer $N$ can be chosen so that the closed interval
$\overline{U_{\varepsilon_{k}}(\mu_{k,j}(t))}$ for $t\in\lbrack a,b]$ and
$\mid$ $k\mid>N$ has no common points with the intervals $\overline
{U_{\varepsilon_{n}}(\mu_{n,i}(t))}\ $for $\mid$ $n\mid\geq N$ and
$(n,i)\neq\left(  k,j\right)  .$ Therefore the circle
\begin{equation}
\left\{  \lambda\in\mathbb{C}:\left\vert \lambda-\mu_{k,j}(t)\right\vert
=\varepsilon_{k}\right\}  \tag{18}%
\end{equation}
belong to the resolvent set of $L_{t}(Q).$ Repeating the proof of the case
$\varepsilon=1,$ one can easily verify that the circle (18) lies in the
resolvent sets of $L_{t}(C+\varepsilon(Q-C))$ for all $\varepsilon\in
\lbrack0,1].$ Since $L_{t}(C)$ has $m_{j}$ eigenvalues (counting the
multiplicity) in the circle (18), the operator $L_{t}(Q)$ has also $m_{j}$ eigenvalues.

$(b)$ Let us denote the eigenvalues of $L_{t}(Q)$ lying in $U_{\varepsilon
_{k}}(\mu_{k,j}(t))$ by $\lambda_{l(t)+1}(t),$ $\lambda_{l(t)+2}(t),...,$
$\lambda_{l(t)+m_{j}}(t).$ We need to prove that $l(t)$ does not depend on
$t\in\lbrack a,b].$ Since we numerate the eigenvalues of $L_{t}(Q)$ in
nondecreasing order (see (2)) $\lambda_{l(t)}(t)$ and $\lambda_{l(t)+m_{j}%
+1}(t)$ do not belong to the interval $U_{\varepsilon_{k}}(\mu_{k,j}(t))$
\ and
\begin{equation}
\lambda_{l(t)}(t)<\lambda_{l(t)+s}(t)<\lambda_{l(t)+m_{j}+1}(t) \tag{19}%
\end{equation}
for all $s=1,2,...,m_{j}.$ It follows from the continuity of the band
functions and (19) that for each $t\in\lbrack a,b]$ there exists a
neighborhood $U(t)$ of $t$ such that
\[
\lambda_{l(t)}(y)<\lambda_{l(t)+s}(y)<\lambda_{l(t)+m_{j}+1}(y)
\]
for $y\in U(t).$ In the other words, $l(y)=l(t)$ for all $y\in U(t).$ Thus we
have%
\begin{equation}
\forall t\in\lbrack a,b],\exists U(t):l(y)=l(t),\forall y\in U(t). \tag{20}%
\end{equation}
Let $U(t_{1}),U(t_{2}),...,U(t_{\omega})$ be a finite subcover of the open
cover $\{U(t):t\in\lbrack a,b]\}$ of the compact $[a,b],$ where $U(t)$ is the
neighborhood of $t$ satisfying (20). By (20), we have $l(y)=l(t_{i})$ for all
$y\in U(t_{i}).$ Clearly, if $U(t_{i})\cap U(t_{j})\neq\emptyset,$ then
$l(t_{i})=l(z)=l(t_{j}),$ where $z\in U(t_{i})\cap U(t_{j})$. Thus
$l(t_{1})=l(t_{2})=...=l(t_{\omega})$ and hence $l(t)$ does not depend on
$t\in\lbrack a,b].$

$(c)$ We consider the case $k>0.$ The case $k<0$ can be considered in the same
way. Since
\[
\lambda_{l+s}(a)\in\left(  \mu_{k,j}(a)-\varepsilon_{k},\mu_{k,j}%
(a)+\varepsilon_{k}\right)
\]
and
\[
\lambda_{l+s}(b)\in\left(  \mu_{k,j}(b)-\varepsilon_{k},\mu_{k,j}%
(b)+\varepsilon_{k}\right)
\]
for $s=1,2,...,m_{j}$ the interval $\left[  \mu_{k,j}(a)+\varepsilon_{k}%
,\mu_{k,j}(b)-\varepsilon_{k}\right]  $ is a subset of $\Gamma_{l+s}$ for
$s=1,2,...,m_{j}.$
\end{proof}

Using this theorem and the construction of the intervals (15) we prove the
following consequence.

\begin{corollary}
There exists $N_{3}>\max\left\{  N,N_{1},N_{2}\right\}  $ and a sequence
$\left\{  \gamma_{k}\right\}  \rightarrow0$ such that $\gamma_{k}\rightarrow0$
as $k\rightarrow\infty$ and the spectral gap $(\alpha,\beta)$ defined in
Corollary 1 and lying in $U(k)$ for $k>N_{3}$ is contained in the intersection
of the sets $S(1,k),S(2,k),...,S(p,k),$ where
\[
S(j,k)=%
{\textstyle\bigcup\limits_{i=1,2,...,p}}
\left(  \left(  \pi k\right)  ^{2}+\frac{\mu_{i}+\mu_{j}}{2}-\gamma
_{k},\left(  \pi k\right)  ^{2}+\frac{\mu_{i}+\mu_{j}}{2}+\gamma_{k}\right)
.
\]

\end{corollary}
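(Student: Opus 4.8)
The plan is to refine Corollary 1 by studying, for each fixed $j$, the pair of unperturbed branches of $L(C)$ that meet at $(\pi k)^2+\mu_j$ and following their perturbation through Theorem 3. Write $k=2\kappa$ or $k=2\kappa+1$ for the Fourier index $\kappa$, so that $(\pi k)^2$ equals $(2\pi\kappa)^2$ or $(2\pi\kappa+\pi)^2$ respectively. For even $k$ the two branches are $\mu_{\kappa,j}(t)=(2\pi\kappa+t)^2+\mu_j$ and $\mu_{-\kappa,j}(t)=(2\pi\kappa-t)^2+\mu_j$, meeting at the periodic point $t=0$; for odd $k$ they are $\mu_{\kappa,j}$ and $\mu_{-\kappa-1,j}$, meeting at the antiperiodic point $t=\pi$. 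On $[0,\pi]$ one branch is increasing and the other decreasing, so near $(\pi k)^2+\mu_j$ one sweeps the values just above and the other those just below, and together the $j$-pair should cover a full two-sided neighborhood of $(\pi k)^2+\mu_j$ inside $U(k)$ apart from a few exceptional values. Throughout I take $N_3$ so large that $k>N_3$ forces $\kappa>\max\{N,N_2\}$, so that Lemma 1 and Theorem 3 apply to both branches.

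First I would record the value of each branch at its exceptional points. Using $(6)$ together with (13) and (14), a direct expansion shows that where the $j$-branch meets an $i$-branch the common value is $(\pi k)^2+\tfrac{\mu_i+\mu_j}{2}+O(\kappa^{-2})$, and the exceptional points lying in $[0,\pi]$ are those with $\mu_i\ge\mu_j$ for one branch and $\mu_i\le\mu_j$ for the other. Hence the exceptional values produced by the $j$-pair are exactly $(\pi k)^2+\tfrac{\mu_i+\mu_j}{2}$ for $i=1,\dots,p$, with $i=j$ giving the meeting point $(\pi k)^2+\mu_j$; these are precisely the centres appearing in $S(j,k)$.

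Next I would apply Theorem 3$(c)$ to both branches on each interval $[a(\cdot,j,s),b(\cdot,j,s)]$ of $(15)$, obtaining for the increasing branch $\mu_{\kappa,j}$ the covered value-interval $[\mu_{\kappa,j}(a)+\varepsilon_\kappa,\mu_{\kappa,j}(b)-\varepsilon_\kappa]$ and the reversed interval for the decreasing branch. Since the exceptional points are spaced $O(\kappa^{-1})$ apart while $\delta_\kappa=o(\kappa^{-1})$, consecutive intervals of $(15)$ are separated only by the radius-$\delta_\kappa$ neighborhood of a single exceptional point $t^\ast$, so the only values a branch leaves uncovered between two of its consecutive covered intervals form the interval $[\mu_{\kappa,j}(t^\ast)-4\pi\kappa\delta_\kappa-\varepsilon_\kappa,\ \mu_{\kappa,j}(t^\ast)+4\pi\kappa\delta_\kappa+\varepsilon_\kappa]$ about $\mu_{\kappa,j}(t^\ast)$, the half-width coming from the slope $\approx 4\pi\kappa$ over the removed neighborhood plus the two lost $\varepsilon_\kappa$-margins. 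Setting $\gamma_k:=4\pi\kappa\delta_\kappa+\varepsilon_\kappa+C\kappa^{-2}$ and using the calibration $(16)$, which makes $4\pi\kappa\delta_\kappa\asymp\varepsilon_\kappa$, each such uncovered interval lies in $U_{\gamma_k}((\pi k)^2+\tfrac{\mu_i+\mu_j}{2})$, and $\gamma_k\to0$ because $\varepsilon_\kappa\to0$. Thus the bands furnished by the $j$-pair cover all of $U(k)$ outside $S(j,k)$; a spectral gap $(\alpha,\beta)\subset U(k)$, being disjoint from every band, must therefore lie in $S(j,k)$. Since $j$ was arbitrary, $(\alpha,\beta)\subset\bigcap_{j=1}^{p}S(j,k)$.

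The step I expect to be the main obstacle is this contiguity claim: that the two branches of the $j$-pair knit together into genuine band coverage of $U(k)$ with holes \emph{only} at the points $(\pi k)^2+\tfrac{\mu_i+\mu_j}{2}$. The delicate junction is the meeting point $(\pi k)^2+\mu_j$, where the increasing branch's lowest covered value is $\mu_{\kappa,j}(\delta_\kappa)+\varepsilon_\kappa=(\pi k)^2+\mu_j+O(\kappa\delta_\kappa)+\varepsilon_\kappa$ and the decreasing branch's highest covered value is its mirror image; one must check that these two half-coverages abut across the $i=j$ hole without opening an extra gap, and likewise that between consecutive non-meeting exceptional points the $\varepsilon_\kappa$-margins lost at the interval ends never exceed the $O(\kappa\delta_\kappa)$ swing of the branch. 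This is exactly the balance that $(16)$ of Lemma 1 enforces, and it is what rules out any spurious gap away from the values $(\pi k)^2+\tfrac{\mu_i+\mu_j}{2}$.
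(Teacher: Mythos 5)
Your proposal is correct and follows essentially the same route as the paper: the paper likewise applies Theorem 3$(c)$, observes that the only values left uncovered are the $\varepsilon_{k}$-extensions of the images $\mu_{k,j}((c,d))$ of the removed intervals (21), computes the centres $\mu_{k,j}(t(2k,j,i))=(2\pi k)^{2}+\tfrac{\mu_{i}+\mu_{j}}{2}+O(k^{-2})$ (and the antiperiodic analogue), and uses the calibration (16) to bound these extensions inside $S(j,k)$. Your additional bookkeeping on how the increasing and decreasing branches of the $j$-pair knit together across the meeting point, and your explicit choice $\gamma_{k}=4\pi\kappa\delta_{\kappa}+\varepsilon_{\kappa}+C\kappa^{-2}$, merely make explicit what the paper leaves implicit in its contraction/extension argument.
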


\begin{proof}
We say that the intervals $[A+\varepsilon,B-\varepsilon]$ and $(A-\varepsilon
,B+\varepsilon)$ are respectively the $\varepsilon>0$ contraction and
extension of the intervals $[A,B]$ and $(A,B).$ In Theorem 3 $(c)$ we proved
that the $\varepsilon_{k}$ contraction of the images $\mu_{k,j}([a,b])$ of the
intervals $[a,b]$ of (15) is a subset of the spectrum of $L(Q).$ On the other
hand, the intervals of (15) are obtained from $[0,\pi]$ by eliminating the
open intervals
\begin{equation}
(t(2k,j,i)-\delta_{k},t(2k,j,i)+\delta_{k}),\text{ }(t(2k+1,j,i)-\delta
_{k},t(2k+1,j,i)+\delta_{k}) \tag{21}%
\end{equation}
for $i=1,2,...,p.$ Therefore, it follows from (6) that the gaps in the
spectrum are the subset of the $\varepsilon_{k}$ extension of the images
$\mu_{k,j}((c,d))$ of the intervals $(c,d)$ of (21). Since
\[
\mu_{k,j}(t(2k,j,i))=\left(  2\pi k\right)  ^{2}+\frac{\mu_{i}+\mu_{j}}%
{2}+\left(  \frac{\mu_{i}-\mu_{j}}{8\pi k}\right)  ^{2}%
\]
and
\[
\mu_{k,j}(t(2k+1,j,i))=\left(  2\pi k+\pi\right)  ^{2}+\frac{\mu_{i}+\mu_{j}%
}{2}+\left(  \frac{\mu_{i}-\mu_{j}}{4\pi(2k+1)}\right)  ^{2},
\]
using (6) and (16) we obtain that for any interval $(c,d)$ of (21) the
interval $\mu_{k,j}((c,d))$ and hence its $\varepsilon_{k}$ extension are
contained in $S(j,k)$ for each $j=1,2,...,p.$ The corollary is proved.
\end{proof}

Now we find a condition on the eigenvalues of the matrix $C$ for which the
spectrum of $L(Q)$ contains the interval $(H,\infty)$ for some constant $H$.
If the matrix $C$ has only one eigenvalue $\mu$ with multiplicity $m,$ then it
is possible that the spectrum of $L(Q)$ has infinitely many gaps. For example,
if $Q=qI,$ where $q$ is not a finite zone scalar potential and $I$ is the
$m\times m$ unit matrix then the spectrum of $L(Q)$ has infinitely many gaps.
If the matrix $C$ has only two eigenvalues $\mu_{1}$ and $\mu_{2},$ then the
sets $S(1,k)$ and $S(2,k)$ have a common interval
\[
\left(  \left(  2\pi k\right)  ^{2}+\frac{\mu_{1}+\mu_{2}}{2}-\gamma
_{k},\left(  2\pi k\right)  ^{2}+\frac{\mu_{1}+\mu_{2}}{2}+\gamma_{k}\right)
.
\]
Therefore, Corollary 2 does not imply that the number of the gaps in the
spectrum of $L(Q)$ is finite. However, we prove that if the number of
different eigenvalues of the matrix $C$ is greater than $2,$ then three sets
$S(j_{1},k),S(j_{2},k)$ and $S(j_{3},k)$ for the large values of $k$ have no
common intervals if the following condition holds.

\begin{condition}
Suppose that there exists a triple $(j_{1},j_{2},j_{3})$ such that
\[
\min_{i_{1},i_{2},i_{3}}\left(  diam(\{\mu_{j_{1}}+\mu_{i_{1}},\mu_{j_{2}}%
+\mu_{i_{2}},\mu_{j_{3}}+\mu_{i_{3}}\})\right)  =d\neq0,
\]
where minimum is taken under condition $i_{s}\in\left\{  1,2,...,p\right\}  $
for $s=1,2,3$ and
\[
diam(E)=\sup_{x,y\in E}\mid x-y\mid.
\]

\end{condition}

Let us first discuss why gaps in $\sigma(L(Q))$ do not appear in the interval
$(H,\infty)$ if $H$ is a large number and Condition 1 holds. Then in Theorem 4
we give the mathematical proof of this statement. For each $j\in\left\{
1,2,3\right\}  $ the set
\[
\sigma_{j}\left(  L(C)\right)  =\left\{  \left(  2\pi k+t\right)  ^{2}+\mu
_{j}:k\in\mathbb{Z},\text{ }t\in(-\pi,\pi]\right\}
\]
(let us call it $j$ spectrum) cover the interval $(H,\infty).$ The
perturbation $Q-C$ may generate a gap in $\sigma_{j}\left(  L(C)\right)  $
only at the neighborhood of the exceptional Bloch eigenvalues $\left(  2\pi
k+t\right)  ^{2}+\mu_{j}$ (let us call it $j$ exceptional Bloch eigenvalue).
On the other hand, Condition 1 implies that the $j_{1},$ $j_{2}$ and $j_{3}$
exceptional Bloch eigenvalues have no common points. That is why, for each
$\lambda\in(H,\infty)$ there exists $s\in\left\{  1,2,3\right\}  $ such that
$\lambda$ does not belong to the neighborhood of $j_{s}$\ exceptional Bloch
eigenvalues. Hence the perturbation $Q-C$ does not generate a gap in
$\sigma_{j_{s}}\left(  L(C)\right)  $ at the neighborhood of $\lambda.$

Now using Condition 1 and Corollary 2 we prove the following.

\begin{theorem}
If the matrix $C$ has three eigenvalues $\mu_{j_{1}},$ $\mu_{j_{2}}$ and
$\mu_{j_{3}}$ satisfying Condition 1, then there exists a number $H$ such that
$(H,\infty)\subset\sigma(L(Q)),$ that is, the number of the gaps in the
spectrum of $L(Q)$ is finite.
\end{theorem}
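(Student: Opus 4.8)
The plan is to build on Corollary 2, which already localizes every large spectral gap. By Corollary 1, any gap $(\alpha,\beta)$ with $\alpha>(\pi N_{1})^{2}$ lies in some $U(s)$ for $s>N_{1}$, and by Corollary 2 (applied to the triple $j_{1},j_{2},j_{3}$) such a gap must lie in the intersection $S(j_{1},k)\cap S(j_{2},k)\cap S(j_{3},k)$ for the relevant $k$. My strategy is therefore to show that, under Condition 1, this triple intersection is empty once $k$ is large enough, which forces the absence of any gap beyond a certain height $H$.

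First I would unwind the definition of the sets $S(j,k)$. Recall
\[
S(j,k)=\bigcup_{i=1,2,...,p}\left((\pi k)^{2}+\tfrac{\mu_{i}+\mu_{j}}{2}-\gamma_{k},\;(\pi k)^{2}+\tfrac{\mu_{i}+\mu_{j}}{2}+\gamma_{k}\right),
\]
so after subtracting the common shift $(\pi k)^{2}$, the set $S(j,k)$ is a union of $\gamma_{k}$-neighborhoods of the points $\tfrac{\mu_{i}+\mu_{j}}{2}$, $i=1,\dots,p$. A point $\lambda$ lies in $S(j_{1},k)\cap S(j_{2},k)\cap S(j_{3},k)$ precisely when there exist indices $i_{1},i_{2},i_{3}$ with $\lambda-(\pi k)^{2}$ within $\gamma_{k}$ of each of $\tfrac{\mu_{j_{s}}+\mu_{i_{s}}}{2}$ for $s=1,2,3$. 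By the triangle inequality this forces the three numbers $\mu_{j_{s}}+\mu_{i_{s}}$ to lie within $4\gamma_{k}$ of one another, i.e. $\operatorname{diam}(\{\mu_{j_{1}}+\mu_{i_{1}},\mu_{j_{2}}+\mu_{i_{2}},\mu_{j_{3}}+\mu_{i_{3}}\})\le 4\gamma_{k}$.

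Next I would invoke Condition 1: the minimum over all admissible triples $(i_{1},i_{2},i_{3})$ of this diameter equals $d\neq 0$. Since $\{\gamma_{k}\}\to 0$, choose $N_{3}'\ge N_{3}$ so large that $4\gamma_{k}<d$ for all $k>N_{3}'$. Then for such $k$ no choice of $i_{1},i_{2},i_{3}$ can make the diameter as small as $4\gamma_{k}$, so the triple intersection $S(j_{1},k)\cap S(j_{2},k)\cap S(j_{3},k)$ is empty. By Corollary 2 this means no spectral gap can lie in $U(k)$ for $k>N_{3}'$, and by Corollary 1 every gap above $(\pi N_{1})^{2}$ lives in some such $U(k)$. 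Setting $H=(\pi\max\{N_{1},N_{3}'\})^{2}$ (or, more carefully, the right endpoint of the last band below this level) gives $(H,\infty)\subset\sigma(L(Q))$, and the number of gaps is finite because only finitely many bands lie below $H$.

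The only genuinely delicate point is bookkeeping rather than any deep obstacle: I must make sure the single height $H$ simultaneously dominates the thresholds $N_{1}$ (from Corollary 1), $N_{3}$ (from Corollary 2), and $N_{3}'$ (from the smallness $4\gamma_{k}<d$), and that the gap-location argument of Corollary 2 was indeed carried out for all three indices $j_{1},j_{2},j_{3}$ at once — which it was, since Corollary 2 places each gap in $\bigcap_{j} S(j,k)$ over all $j$. The essential mechanism is simply that three affine families of intervals, mutually separated by the nonzero constant $d$, cannot have a common point once the perturbation-induced widths $\gamma_{k}$ shrink below $d/4$.
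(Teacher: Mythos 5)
Your proposal is correct and follows essentially the same route as the paper's own proof: localize each large gap in the triple intersection $S(j_{1},k)\cap S(j_{2},k)\cap S(j_{3},k)$ via Corollary 2, use the triangle inequality to show a common point would force $\operatorname{diam}(\{\mu_{j_{1}}+\mu_{i_{1}},\mu_{j_{2}}+\mu_{i_{2}},\mu_{j_{3}}+\mu_{i_{3}}\})<4\gamma_{k}$, and then contradict Condition 1 once $4\gamma_{k}<d$. Your extra bookkeeping about choosing $H$ to dominate all thresholds is a minor (and harmless) elaboration of what the paper does implicitly.
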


\begin{proof}
By Corollary 2 the gap $(\alpha,\beta)$ lying\ in $U(k)$ for $k>N_{3}$ belong
to the set $S(j_{s},k,)$ for all $s\in\left\{  1,2,3\right\}  $ and for some
$k>N_{3}.$ Therefore it is enough to prove that
\begin{equation}%
{\textstyle\bigcap\limits_{s=1,2,3}}
S(j_{s},k) \tag{22}%
\end{equation}
is an empty set for $k>N_{3}$. Since $\gamma_{k}\rightarrow0$ the number
$N_{3}$ can be chosen so that $4\gamma_{k}<d$ for $k>N_{3}$. If the set (22)
contains an element $x,$ then using the definitions of $S(j_{s},k)$, we obtain
that there exist $k>N_{3}$ and $i_{s}\in\left\{  1,2,...,p\right\}  $ such
that
\[
\mid x-(\pi k))^{2}-\frac{\mu_{j_{s}}+\mu_{i_{s}}}{2}\mid<\gamma_{k}%
\]
for all $s=1,2,3$. This implies that
\[
\left\vert \left(  \mu_{j_{u}}+\mu_{i_{u}}\right)  -\left(  \mu_{j_{v}}%
+\mu_{i_{v}}\right)  \right\vert <4\gamma_{k}<d
\]
for all $u,v\in\left\{  1,2,3\right\}  $, where $v\neq u.$ This contradicts
Condition 1.
\end{proof}


\begin{thebibliography}{99}                                                                                               %


\bibitem {}D. Chelkak and E. Korotyaev, \textit{Spectral estimates for
Schr\"{o}dinger operators with periodic matrix potentials on the real
line},\ International Mathematics Research Notices, vol. 2006, Article ID
60314, 41 pages, 2006.

\bibitem {}N. Danford and J.T. Schwartz, \textit{Linear Operators, Part II:
Spectral Theory, }Wiley-Interscience, New York, 1988\textit{.}

\bibitem {}M. S. P. Eastham, \textit{The Spectral Theory of Periodic
Differential Operators,} Hafner, New York, 1974.

\bibitem {}I. M. Gelfand, \textit{Expansions in series of eigenfunctions of an
equation with periodic coefficients,} Sov. Math. Dokl. 73 (1950), 1117--1120.

\bibitem {}A. R. Its and V. B. Matveev, \textit{Hill operators with finitely
many lacunae and multisoliton solutions of the Korteweg-de Vries equation,}
Trudy Mat. Fiz\textit{.} 23 (1975), 51-67.

\bibitem {}T. Kato, \textit{Perturbation Theory for Linear Operators},
Springer-Verlag, Berlin, 1980.

\bibitem {}B. M. Levitan, \textit{Inverse Sturm-Liouville Problems}, Berlin,
Boston: De Gruyter, 2018,

\bibitem {}F. G. Maksudov, O. A. Veliev, \textit{Spectral Analysis of
Differential Operators with Periodic Matrix Coefficients}, Differ. Equations
\textbf{25} (1989), 271-277.

\bibitem {}V. A. Marchenko and I. V. Ostrovskii, \textit{A characterization of
the spectrum of the Hill operator}. Mat. USSR-Sb., \textbf{26 }(1975), 493-554.

\bibitem {}D. C. McGarvey, \textit{Differential operators with periodic
coefficients in }$L_{p}(-\infty,\infty)$. Journal of Mathematical Analysis and
Applications \textbf{11} (1965), 564-596.

\bibitem {}M. A. Naimark, \textit{Linear Differential Operators}. London.
George G. Harap\&Company 1967.

\bibitem {}M. Reed, B. Simon,\textit{ Methods of Modern Mathematical Physics},
Volume 4, Academic Press, New York, 1987.

\bibitem {}F. S. Rofe-Beketov, \textit{The spectrum of non-self-adjoint
differential operators with periodic coefficients}, Soviet Math. Dokl.
\textbf{4} (1963), 1563-1566.

\bibitem {}O. A. Veliev, \textit{On the non-self-adjoint Sturm-Liouville
operators with matrix potentials}, Mathematical Notes \textbf{81} (2007), 440-448.

\bibitem {}O. A. Veliev, \textit{Uniform convergence of the spectral expansion
for a differential operator with periodic matrix coefficients}, Boundary Value
Problems \textbf{ID 628973} (2008), 22 pp.

\bibitem {}O. A. Veliev , \textit{On the Hill's operator with a matrix
potential}, Mathematische Nachrichten \textbf{281} (2008), 1341-1350.
\end{thebibliography}
\end{document}